\def\e{{\mathbf{\,e}}}
\def\ep{{\mathbf{\,e}}_p}
\def\({\left(}
\def\){\right)}
\def\fl#1{\left\lfloor#1\right\rfloor}
\def\rf#1{\left\lceil#1\right\rceil}
\def\mand{\qquad \mbox{and} \qquad}
\def\vec#1{{\mathbf #1}}
\def \bfalpha{\bm{\alpha}}
\def \bfbeta{\bm{\beta}}
\def \bfeta{\bm{\eta}}
\def \bfzeta{\bm{\zeta}}
\def \bfeta{\bm{\eta}}
\def\sssum{\mathop{\sum\sum\sum}}
\numberwithin{figure}{section}
\def\SAMN{\sS_r(\bfalpha; \cM, \cN)}
\def\SABMN{\sS_r(\bfalpha, \bfbeta ; \cM, \cN)}
\title[Congruences with intervals and arbitrary sets]
      {Congruences with intervals and arbitrary sets}
\author[W.\ D.\ Banks]{William Banks}
\address{Department of Mathematics, University of Missouri, Columbia, MO 65211 USA}
\email{bankswd@missouri.edu}
\author[I.\ E.\ Shparlinski]{Igor Shparlinski}
\address{Department of Pure Mathematics,
University of New South Wales, Sydney, NSW 2052 Australia}
\email{igor.shparlinski@unsw.edu.au}
\date{\today}
\begin{document}

\begin{abstract}
Given a prime $p$, an integer $H\in[1,p)$,
and an arbitrary set $\cM\subseteq \FF_p^*$, 
where $\FF_p$ is the finite field with $p$ elements, let $J(H,\cM)$
denote the number of solutions to the congruence
$$
xm\equiv yn\bmod p
$$
for which $x,y\in[1,H]$ and $m,n\in\cM$. In this paper, we bound
$J(H,\cM)$ in terms of $p$, $H$ and the cardinality of $\cM$. In a wide range of 
parameters, this bound is optimal. We give two applications of this bound: to 
new estimates of trilinear character sums and to bilinear sums with Kloosterman sums,
complementing some recent results of  Kowalski,  Michel and  Sawin~(2018). 
\end{abstract}

\subjclass[2010]{11A07,11L05, 11L40}
\keywords{congruences, character sums, Kloosterman sums}

\maketitle


\section{Introduction}
\subsection{Set up}

For each prime $p$ we denote by $\FF_p$ the finite field with
$p$~elements, which can be identified with the set
\begin{equation}
\label{eq:Fp repr}
\FF_p = \{0,\ldots,p-1\}.
\end{equation}
Given an integer $H\in[1,p)$ and an arbitrary set
$\cM\subseteq \FF_p^*$, let $J(H,\cM)$ be the number
of solutions $(x,y,m,n)$ to the congruence
\begin{equation}
\label{eq:Cong}
xm\equiv yn\bmod p
\end{equation}
for which $x,y\in[1,H]$ and $m,n\in\cM$.
In this paper, we study the problem of bounding $J(H,\cM)$ in terms
of $p$, $H$ and the cardinality $M=|\cM|$.

In what follows, we freely alternate between the language
of congruences and that of equations over finite fields.
For example, in view of the isomorphism $\ZZ/p\ZZ\cong\FF_p$
the congruence~\eqref{eq:Cong} is equivalent to
equation $xm=yn$ over the field $\FF_p$.

Throughout, the notations $U=O(V)$ and $U\ll V$ are each equivalent to the
statement that the inequality $|U|\le c\,V$ holds with a
constant $c>0$ which may depend (where obvious) on the integer
$\ell\ge 1$ or the real number $\eps>0$. 

\subsection{Initial estimates}
\label{sec:initest}

It is straightforward to show that the estimate
\begin{equation}
\label{eq:Basic}
J(H,\cM)=\frac{H^2M^2}{p} + O(H M^{3/2}\,\log p)
\end{equation}
holds. Indeed,~\eqref{eq:Basic} follows easily from~\eqref{eq:N char sum} 
below combined with a bound on power moments of character sums;
see, for example,~\cite[Theorem~2]{ACZ}. Moreover,
the same method shows that~\eqref{eq:Basic} holds for congruences
in which the variables $x,y$ lie in intervals of the form $[A+1, A+H]$. 

The stronger \textit{conditional} estimate
\begin{equation}
\label{eq:Moderate}
J(H, \cM) = \frac{H^2M^2}{p} + O\(H M p^{o(1)}\)
\qquad(p\to\infty)
\end{equation}
can be established using~\eqref{eq:N char sum}
in conjunction with a ``square-root cancellation'' bound
on character sums implied by the Generalised Riemann Hypothesis;
see, for example, Munsch and Shparlinski~\cite[Equation~(4)]{MuSh}.

Furthermore, it is shown in the proof
of a result of Garaev (see the bound on $J$ in the proof of~\cite[Lemma~2.1]{Gar})
that for any fixed integer $\ell\ge 1$ one has the upper bound
\begin{equation}
\label{eq:Gar-J}
J(H,\cM)\le\(Hp^{-1/\ell}+1\)H^{1+o(1)}M^{1+1/\ell}
\qquad(H\to\infty).
\end{equation}

In the present paper, we obtain several results which 
improve~\eqref{eq:Basic} and~\eqref{eq:Gar-J}.
Moreover, in a wide range
of the parameters $p,H,M$ we achieve an \textit{unconditional}
upper bound on $J(H,\cM)$ that has roughly
the same shape as~\eqref{eq:Moderate}.

As an application of this bound, we give a 
new estimate on certain trilinear sums of multiplicative characters. 
We also combine it with recent results of  Kowalski,  Michel and  Sawin~\cite{KMS2}
to derive a new bound on bilinear sums with Kloosterman sums
that extends the range of the applicability of \cite{KMS2}. 

\subsection{Notational conventions}

Throughout the paper,
the letter $p$ always denotes a prime number. 

We use $|\cA|$ to denote the cardinality of a finite  set $\cA$.  

The notations $U = O(V)$, 
$U \ll V$ and $ V\gg U$ are all used to indicate
that $|U|\leqslant c|V| $ hold for some absolute
constant $c>0$, and we write $U \asymp V$ if $U \ll V \ll  U$. 
We also use $U^{o(1)}$ to denote any function $f(U)$ such that
$\log |f(U)|/\log U \to 0$ as $U \to \infty$.

\section{Main results}
\label{sec:results}

\subsection{Congruences}

We use ideas of Heath-Brown~\cite{H-B2} to establish 
the following estimate. 

\begin{theorem}
\label{thm:cong} 
For an integer $H\in[1,p)$ and a set $\cM\subseteq \FF_p^*$
of cardinality $M$, the following holds as $p\to\infty$:
$$
J(H,\cM)\ll\begin{cases}
H^2M^2 p^{-1} + H Mp^{o(1)}
&~~~~\hbox{if $H\ge p^{2/3}$},\\
H^2M^2p^{-1}+HM^{7/4}p^{-1/4+o(1)}+M^2
&~~~~\hbox{if $H<p^{2/3}$ and $M\ge p^{1/3}$},\\
HMp^{o(1)}+M^2
&~~~~\hbox{if $H<p^{2/3}$ and $M<p^{1/3}$}.
\end{cases}
$$
\end{theorem}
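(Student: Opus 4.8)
The plan is to express $J(H,\cM)$ as a character sum and then apply a higher-moment / additive-energy argument in the spirit of Heath-Brown's work on multiplicative congruences, splitting into cases according to the relative sizes of $H$, $M$ and $p$. Writing out the orthogonality relation, one has the standard identity
\begin{equation}
\label{eq:N char sum}
J(H,\cM)=\frac1{p-1}\sum_{\chi}\Bigl|\sum_{x\le H}\chi(x)\Bigr|^2\Bigl|\sum_{m\in\cM}\chi(m)\Bigr|^2,
\end{equation}
where $\chi$ runs over the multiplicative characters of $\FF_p^*$. The principal character contributes the main term $H^2M^2/(p-1)$ (more precisely $H^2(M^2+O(M))/(p-1)$, absorbed into the stated bounds), so everything reduces to bounding the contribution of the nonprincipal $\chi$.

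For the regime $H\ge p^{2/3}$ I would invoke a Burgess-type / completion estimate: the interval sum $\sum_{x\le H}\chi(x)$ admits square-root cancellation on average over $\chi$, and combined with the trivial bound $\sum_\chi|\sum_{m\in\cM}\chi(m)|^2 = (p-1)M + O(M^2)$ (again orthogonality) one extracts $HMp^{o(1)}$; the point is that once $H$ is a sizeable power of $p$, Burgess's bound makes each $|\sum_{x\le H}\chi(x)|$ small enough pointwise, or one uses the large-sieve inequality for the $\chi$-average. For the case $H<p^{2/3}$ with $M\ge p^{1/3}$, the key is to bound a $2k$-th moment of the interval sum — essentially counting solutions of $x_1\cdots x_k\equiv y_1\cdots y_k\pmod p$ with all variables in $[1,H]$ — using Heath-Brown's method (reflecting the congruence via the divisor bound and a dissection of the hyperbola $H<p^{2/3}$), and pairing this against the fourth moment or the additive energy of $\cM$; optimising the free parameter $k$ against $M^{1/k}$ should produce the term $HM^{7/4}p^{-1/4+o(1)}$, while the term $M^2$ is simply the diagonal contribution $x m\equiv y n$ with $xm$ ranging over at most $H^2M^2$ values but trivially at least $M^2$ forced solutions from $x=y$. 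Finally, for $H<p^{2/3}$ and $M<p^{1/3}$ the ranges are small enough that a direct combinatorial count suffices: the products $xm$ with $x\le H$, $m\in\cM$ take at most $HM$ genuinely distinct residues "most of the time", and a Cauchy–Schwarz plus the small-box observation that no residue class mod $p$ is hit too often (since $H<p^{2/3}$ prevents wrap-around collisions beyond $p^{o(1)}$ multiplicity) yields $HMp^{o(1)}+M^2$.

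The main obstacle I expect is the middle case: getting the exponent $7/4$ on $M$ and $-1/4$ on $p$ precisely right requires a careful treatment of the $2k$-th moment of $\sum_{x\le H}\chi(x)$ for $H$ up to $p^{2/3}$ — this is exactly where Heath-Brown's hyperbola-dissection and the interplay between the "short-interval" divisor estimate and the completion step are delicate — and then balancing that against the correct moment of the $\cM$-sum (one must use the second moment, not the fourth, to avoid losing a power of $M$, which is why the large sieve / orthogonality bound $\sum_\chi|\sum_{m\in\cM}\chi(m)|^2\ll pM+M^2$ is the right tool rather than Weil-type pointwise bounds). A secondary technical point is ensuring the $p^{o(1)}$ factors (coming from the divisor function and from the number of parameter choices) do not degrade into genuine $\log$ powers that would break the claimed shape; this is handled by choosing $k\asymp \log M/\log p$ or a fixed large $\ell$ as in \eqref{eq:Gar-J} and absorbing everything into $p^{o(1)}$.
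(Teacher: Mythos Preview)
Your proposal has the right starting point---the character-sum identity~\eqref{eq:N char sum}---but it is missing the two structural ingredients that actually drive the proof, and the direct moment/Burgess approach you sketch can only recover the weaker estimate~\eqref{eq:Basic}, not the theorem.

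The paper does \emph{not} attack the character sum over $\chi$ by pointwise Burgess or by a $2k$-th moment of the interval sum. Instead it combines two devices. First, a \emph{recursive inequality} (Lemma~\ref{lem:HtoK}): for any $K$,
\[
K\,J(H,\cM)=H\,J(K,\cM)+O\bigl((H+K)(HKp^{-1}+1)M^2\bigr),
\]
proved by geometry of numbers---for each pair $(m,n)$ one works in the rank-$2$ lattice $\{(x,y):xm\equiv yn\bmod p\}$ and counts points in boxes of side $H$ versus side $K$ using a reduced basis. This is the Heath-Brown idea from~\cite{H-B2}; it is not a hyperbola dissection of a $2k$-fold product congruence. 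Second, a \emph{squaring step}: Cauchy--Schwarz on~\eqref{eq:N char sum} plus the divisor bound gives
\[
J(K,\cM)\ll J(K^2,\cM)^{1/2}M^{1/2}p^{o(1)}.
\]
Feeding this into the recursive lemma yields an inequality for $J(H,\cM)$ in terms of $J(K^2,\cM)$, and the three cases of the theorem come from three choices of $K$: $K\asymp H^{1/2}$ (so the inequality becomes self-referential and solves to $HMp^{o(1)}$), $K\asymp M^{1/4}p^{1/4}$, and $K\asymp p^{1/3}$.

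Your plan, by contrast, tries to bound the $\chi$-sum directly. But $\sum_\chi\bigl|\sum_{x\le H}\chi(x)\bigr|^4$ only encodes the congruence $x_1x_2\equiv x_3x_4$, and pairing it by Cauchy--Schwarz with $\sum_\chi\bigl|\sum_{m\in\cM}\chi(m)\bigr|^4\le (p-1)M^3$ gives exactly $HM^{3/2}p^{o(1)}$, i.e.\ the basic bound~\eqref{eq:Basic}. Higher moments $2k$ of the interval sum run into the same wall: you lose a factor $M^{1/2}$ relative to the target because you are forced to use at least a fourth moment of the $\cM$-sum somewhere, and there is no mechanism in your outline to win it back. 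The exponent $7/4$ in the middle range is \emph{not} produced by optimising a moment parameter $k$; it comes from the specific choice $K=\lceil M^{1/4}p^{1/4}\rceil$ in the lattice-recursion, which puts $K^2\ge p^{2/3}$ so that the already-established first case applies to $J(K^2,\cM)$. You should replace the moment-optimisation plan with the two-step bootstrap above.
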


For $M\le p^{1/3+o(1)}$, Theorem~\ref{thm:cong} yields the following 
unconditional variant of the conditional estimate~\eqref{eq:Moderate}:
\begin{equation}
\label{eq:large H}
J(H,\cM) \ll H^2M^2 p^{-1} + H Mp^{o(1)}+M^2.
\end{equation}
In particular, we have~\eqref{eq:Moderate} if 
$$
H \ge p^{2/3+o(1)} \qquad \text{or} \qquad M\le  \min\{H, p^{1/3}\} p^{o(1)}.
$$ 

\subsection{Trilinear character sums}
\label{sec:char sum}

We use $\cX$ to denote the set of
multiplicative characters of $\FF_p^*$,
and $\cX^*=\cX\setminus\{\chi_0\}$ is the set of   
\textit{nonprincipal} characters; for the relevant background
on multiplicative characters, we refer the reader to
Iwaniec and Kowalski~\cite[Chapter~3]{IwKow}.

We now give some applications to bounds of certain
trilinear character sums.
Specifically, for an integer $H\in[1,p)$, two sets
$\cK,\cM\subseteq \FF_p^*$,
a character $\chi\in\cX^*$, and arbitrary complex weights
$\bfalpha=(\alpha_h)_{h=1}^H$, $\bfzeta=(\zeta_k)_{k\in\cK}$
and $\bfeta=(\eta_m)_{m\in\cM}$, we define
$$
W_\chi(H,\cK,\cM;\bfalpha,\bfzeta,\bfeta)
=\sum_{h=1}^H\sum_{k\in\cK}
\sum_{m\in\cM}\alpha_h\zeta_k\eta_m\,\chi(h+km). 
$$

Similar sums with only one set in $\FF_p^*$ have been previously studied;
a variety of bounds and corresponding applications
can be found in~\cite{BKS2,Chang,Kar,ShkShp}.
To simplify the formulation the next theorem
(and since it is the most interesting case in view of, e.g.,
the results of Chang~\cite{Chang}) we consider only the case in which
one of the sets has its cardinality bounded above by $p^{1/3+o(1)}$
(thus, the bound~\eqref{eq:large H} is at our disposal).

\begin{theorem}
\label{thm:char-3} 
Let the notation be as above, and let $K$ and $M$
denote the cardinalities of $\cK$ and $\cM$, respectively.
Suppose that $M\le p^{1/3+o(1)}$
and that the weights $\alpha_h,\zeta_k,\eta_m$
are all bounded by one in absolute value. Then,
for any fixed integer $\ell\ge 1$ we have 
\begin{align*}
&|W_\chi(H,\cK,\cM;\bfalpha,\bfzeta,\bfeta)|\\
&\qquad\le HKM\(p^{-1/2\ell}+H^{-1/2\ell}M^{-1/2\ell}+H^{-1/\ell}\)
\(p^{1/4\ell}+K^{-1/2}p^{1/2\ell}\)p^{o(1)}.
\end{align*}
\end{theorem}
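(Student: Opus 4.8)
The plan is to run a Hölder-amplification argument in the spirit of Karatsuba/Vinogradov, extracting the multiplicative character structure via the multiplicative energy encoded by $J(H,\cM)$ and by an analogous count for $\cK$. First I would apply Hölder's inequality in the variables $h$ and $k$ to isolate a $2\ell$-th power of the character sum over $m\in\cM$; after trivially bounding the $\alpha_h,\zeta_k$ by one, this yields
$$
|W_\chi|^{2\ell}\le (HK)^{2\ell-1}\sum_{h=1}^H\sum_{k\in\cK}\Bigl|\sum_{m\in\cM}\eta_m\,\chi(h+km)\Bigr|^{2\ell}.
$$
Expanding the $2\ell$-th power, completing the sums over $h$ and $k$ to all of $\FF_p$ (this is where an extra factor appears that one controls by a completion/positivity trick, or one keeps the restricted ranges and uses the congruence structure), and invoking the Weil bound for the character sums $\sum_{h,k}\chi\bigl(\prod(h+km_i)\bigr)$ over the complete ranges, one arrives at a main term of size $(HK)p^{?}$ times $M^{2\ell}$ coming from the diagonal $\{m_1,\dots,m_\ell\}=\{m_{\ell+1},\dots,m_{2\ell}\}$ together with an error governed by how many of the $2\ell$ points coincide — and the number of coincidences is exactly what $J(H,\cM)$-type and $J(K,\dots)$-type counts measure after dyadic reorganization.

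Second, I would feed in the bound~\eqref{eq:large H}: since $M\le p^{1/3+o(1)}$ we have $J(H,\cM)\ll H^2M^2p^{-1}+HMp^{o(1)}+M^2$, and the three competing terms here are precisely the sources of the three summands $p^{-1/2\ell}$, $H^{-1/2\ell}M^{-1/2\ell}$ (from the $HM$ term, which is the ``generic'' collision count) and $H^{-1/\ell}$ (from the $M^2$ term, the fully-degenerate case where the $h$-variable drops out) appearing in the first parenthetical factor of the claimed bound, each normalized by the trivial size $HKM$. The second parenthetical factor $p^{1/4\ell}+K^{-1/2}p^{1/2\ell}$ should emerge from handling the $k$-aspect: either one has genuine cancellation from the Weil bound in $k$ (contributing $p^{1/4\ell}$ after taking a $2\ell$-th root of a $p^{1/2}$-type square-root saving per character sum, combined over $\ell$ pairs), or one is in the degenerate regime where only $K^{-1/2}$ of the trivial bound survives against a $p^{1/2\ell}$ loss from the incomplete-sum completion in that variable.

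Third, after assembling the error terms I would take $2\ell$-th roots, and carefully verify that the $p^{o(1)}$ factors absorb all logarithmic losses from dyadic decompositions, the completion of incomplete sums, and the divisor-type bounds needed to count tuples $(m_1,\dots,m_{2\ell})$ with prescribed multiplicities; this bookkeeping, while routine, is where the claimed exponents must be matched exactly against the structure of~\eqref{eq:large H}.

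\textbf{Main obstacle.} The delicate point is the passage from the restricted ranges $h\in[1,H]$, $k\in\cK$ to complete sums over $\FF_p$ so that the Weil bound applies: completing the $h$-sum costs a factor of roughly $\log p$ per step (fine, absorbed by $p^{o(1)}$) but completing or not completing the $k$-sum is exactly what produces the dichotomy $p^{1/4\ell}$ versus $K^{-1/2}p^{1/2\ell}$, and getting the interplay between the Weil square-root saving, the completion loss, and the collision count from $J(H,\cM)$ to line up so that \emph{all three} of the terms $p^{-1/2\ell}$, $H^{-1/2\ell}M^{-1/2\ell}$, $H^{-1/\ell}$ survive with the stated exponents — rather than one of them dominating and collapsing the bound — is the real content. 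I expect the fully-degenerate term $H^{-1/\ell}$ (arising from the $M^2$ piece of~\eqref{eq:large H}, i.e. tuples where the $h$-dependence vanishes identically, forcing $\chi(h+km)$ to be independent of $h$) to require the most care, since it corresponds to a genuinely different combinatorial configuration than the other two and must be tracked separately through the Hölder expansion.
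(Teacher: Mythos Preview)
Your high-level ingredients are right --- H\"older, Weil, and the bound~\eqref{eq:large H} on $J(H,\cM)$ --- but the way you organise the H\"older step is backwards, and this creates the very obstacle you flag at the end. You apply H\"older in $(h,k)$ and raise the $m$-sum to the $2\ell$-th power; after expanding, you are left with an \emph{incomplete} sum over $h\in[1,H]$ and a sum over an \emph{arbitrary} set $k\in\cK$, neither of which admits the Weil bound, and there is no mechanism by which $J(H,\cM)$ --- a count of solutions to $h_1m_2\equiv h_2m_1\bmod p$ --- can arise from collisions among the $m_i$ alone. Your vague reference to ``dyadic reorganization'' does not bridge this gap.

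The missing idea is a multiplicative change of variables. Since $\chi$ is multiplicative, $\chi(h+km)=\chi(m)\chi(hm^{-1}+k)$; absorbing $|\chi(m)|=1$ and setting $\lambda=hm^{-1}$ gives
\[
|W_\chi|\le\sum_{\lambda\in\FF_p^*}\vartheta(\lambda)\Bigl|\sum_{k\in\cK}\zeta_k\chi(\lambda+k)\Bigr|,
\qquad
\vartheta(\lambda)=\bigl|\{(h,m):hm^{-1}\equiv\lambda\}\bigr|.
\]
Now H\"older is applied in the single variable $\lambda$, with the factorisation $\vartheta(\lambda)=\vartheta(\lambda)^{1-1/\ell}\cdot\vartheta(\lambda)^{1/\ell}$, so that the first moment $\sum_\lambda\vartheta(\lambda)=HM$ and the second moment $\sum_\lambda\vartheta(\lambda)^2=J(H,\cM)$ enter directly. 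The $2\ell$-th power lands on the $k$-sum, not the $m$-sum, and --- crucially --- the outer sum over $\lambda$ now runs over all of $\FF_p^*$, so Weil applies with no completion whatsoever. The factor $p^{1/4\ell}+K^{-1/2}p^{1/2\ell}$ then comes not from completing in $k$ but from the off-diagonal/diagonal split $K^{2\ell}p^{1/2}+K^{\ell}p$ when one expands $|\sum_k\zeta_k\chi(\lambda+k)|^{2\ell}$ and sums over $\lambda$. With this reorganisation your ``main obstacle'' simply disappears, and the three terms of~\eqref{eq:large H} feed through exactly as you anticipated.
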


In particular, if under the conditions of Theorem~\ref{thm:char-3}
we have $K\ge p^{\eps}$ for some fixed $\eps>0$,
then taking $\ell$ large enough to guarantee that 
$p^{1/4\ell}\ge K^{-1/2}p^{1/2\ell}$, the bound of Theorem~\ref{thm:char-3} 
becomes
\begin{align*}
&|W_\chi(H,\cK,\cM;\bfalpha,\bfzeta,\bfeta)|\\
&\qquad\qquad\le HKM\(p^{-1/4\ell}
+(HMp^{-1/2})^{-1/2\ell}
+(Hp^{-1/4})^{-1/\ell}\)p^{o(1)}.
\end{align*}
Thus, we obtain the following corollary.

\begin{corollary}
\label{cor:char} For any $\eps> 0$ there exists $\kappa > 0$
with the following property. Let the notation be as in Theorem~\ref{thm:char-3}.
Suppose that
$$
K\ge p^\eps,\qquad
M\le p^{1/3+o(1)},\qquad
HM\ge p^{1/2+\eps}\mand
H\ge p^{1/4+\eps}.
$$
Suppose further that the weights $\alpha_h,\zeta_k,\eta_m$
are all bounded by one in absolute value. Then
$$
W_\chi(H,\cK,\cM;\bfalpha,\bfzeta,\bfeta)\ll HKM p^{-\kappa}.
$$
\end{corollary}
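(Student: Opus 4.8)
The plan is to obtain Corollary~\ref{cor:char} as a direct consequence of Theorem~\ref{thm:char-3}, by choosing the free integer parameter $\ell$ suitably in terms of $\eps$ and then observing that each of the three resulting terms carries a saving that is a fixed negative power of~$p$.

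First I would fix $\ell=\ell(\eps)$ to be any integer with $\ell\ge 1/(2\eps)$; since $\eps>0$ is fixed, this is a legitimate fixed choice. For this $\ell$ we have $K\ge p^{\eps}\ge p^{1/2\ell}$, hence $K^{-1/2}p^{1/2\ell}\le p^{1/4\ell}$, so the last factor in the bound of Theorem~\ref{thm:char-3} satisfies $p^{1/4\ell}+K^{-1/2}p^{1/2\ell}\le 2p^{1/4\ell}$. Carrying this factor through the preceding bracket (and using $M\le p^{1/3+o(1)}$, which is precisely the standing hypothesis of Theorem~\ref{thm:char-3}, so that the bound~\eqref{eq:large H} on the underlying congruence is available), we arrive at the bound displayed just before the statement of the corollary, namely
$$
|W_\chi(H,\cK,\cM;\bfalpha,\bfzeta,\bfeta)|
\le HKM\(p^{-1/4\ell}+(HMp^{-1/2})^{-1/2\ell}+(Hp^{-1/4})^{-1/\ell}\)p^{o(1)}.
$$

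Next I would invoke the two remaining hypotheses to control the bracket. Since $HM\ge p^{1/2+\eps}$ we have $HMp^{-1/2}\ge p^{\eps}$, whence $(HMp^{-1/2})^{-1/2\ell}\le p^{-\eps/2\ell}$; since $H\ge p^{1/4+\eps}$ we have $Hp^{-1/4}\ge p^{\eps}$, whence $(Hp^{-1/4})^{-1/\ell}\le p^{-\eps/\ell}$; and trivially $p^{-1/4\ell}$ is a negative power of $p$. Writing $\delta=\min\{1/4,\eps/2\}/\ell>0$, which depends only on $\eps$, the bracket is therefore $O(p^{-\delta})$, and we obtain $|W_\chi(H,\cK,\cM;\bfalpha,\bfzeta,\bfeta)|\ll HKM\,p^{-\delta+o(1)}$.

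Finally, since $\delta>0$ is fixed and since all implied constants above (which may depend on $\ell$) are absorbed into the $p^{o(1)}$ factor, for all sufficiently large $p$ one has $p^{o(1)}\le p^{\delta/2}$, so the asserted inequality holds with $\kappa=\delta/2$. I do not expect any genuine obstacle here: the entire arithmetic content is contained in Theorem~\ref{thm:char-3}, and the only points needing care are that $\ell$ is chosen once and for all as a function of $\eps$ (so that it is a \emph{fixed} integer and the dependence of constants on it is harmless) and that the hypotheses $HM\ge p^{1/2+\eps}$ and $H\ge p^{1/4+\eps}$ are exactly what is needed to convert the two ``structural'' terms $(HMp^{-1/2})^{-1/2\ell}$ and $(Hp^{-1/4})^{-1/\ell}$ into negative powers of $p$.
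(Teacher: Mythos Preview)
Your proposal is correct and follows exactly the approach indicated in the paper: choose $\ell\ge 1/(2\eps)$ so that $K^{-1/2}p^{1/2\ell}\le p^{1/4\ell}$, reduce the bound of Theorem~\ref{thm:char-3} to the three-term form displayed just before the corollary, and then observe that the hypotheses $HM\ge p^{1/2+\eps}$ and $H\ge p^{1/4+\eps}$ turn each term into a fixed negative power of~$p$. The paper stops at ``Thus, we obtain the following corollary,'' and your write-up simply makes explicit the final step of extracting~$\kappa$.
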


It is clear that Corollary~\ref{cor:char}, coupled with standard techniques using
bivariate shifts $h\mapsto h+uv$, can be used to obtain nontrivial
estimates for double sums
$$
S_{\chi}(H,\cM)=\sum_{m\in\cM}\left|\sum_{h=1}^H\chi(h+m)\right|.
$$

\subsection{Bilinear sums of Kloosterman sum}
\label{sec:Kloos sum}

Next, we consider multidimensional Kloosterman sums of the form
$$
K_r(n)=\frac{1}{p^{(r-1)/2}} \sum_{\substack{x_1, \ldots, x_r =1\\ 
x_1 \cdots x_r \equiv n \bmod  p}}^{p-1} \ep(x_1 + \cdots + x_r),
$$
where $\e(t)=e^{2\pi i t/p}$ for all $t\in\RR$.
By the classical Deligne bound we have
\begin{equation}
\label{eq:Deligne}
|K_r(n)| \le r;
\end{equation}
see, for example,~\cite[Equation~(11.58)]{IwKow} and the follow-up discussion.

Recently, motivated by an abundance of applications,
there has been considerable interest in the estimation of
weighted sums of Kloosterman sums
$$
\SAMN=\sum_{m\in\cM}\sum_{n\in\cN}\alpha_m K_r(mn)
$$
with two sets $\cM, \cN \subseteq  \FF_p^*$ and complex weights
$\bfeta=\(\eta_m\)_{m \in \cM}$, and also
$$
\SABMN=\sum_{m\in\cM}\sum_{n\in\cN}\alpha_m\beta_n K_r(mn)
$$
with complex weights $\bfalpha=\(\alpha_m\)_{m\in\cM}$
and $\bfbeta=\(\beta_n\)_{n\in\cN}$;  
see~\cite{BFKMM1,BFKMM2,FKM,KMS1,KMS2,Shkr,Shp,ShpZha}
and the references therein. Of course, only bounds that are superior
to those that follow directly from~\eqref{eq:Deligne}
are of interest and use. 

The bound  of Theorem~\ref{thm:cong} can be embedded
in the arguments of~\cite{BFKMM1,KMS1,KMS2} which,
for both sums $\SAMN$ and $\SABMN$, rely on a bound for $J(H,\cM)$
in the special that $\cM=\{1,\ldots,M\}$ for some positive integer $M$.
In the present paper, we demonstrate the idea only
in the simple case of the sums  $\SAMN$.

\begin{theorem}
\label{thm:Kloost}
For any set $\cM\subseteq \FF_p^*$ of cardinality $M$, an interval
$\cN\subseteq \FF_p^*$ of length $N<p$, complex weights $\alpha_m$ bounded by one
in absolute value, and a fixed even integer $\ell\ge 1$, we have
\begin{align*}
|\SAMN|&\le MN\(N^{-1/2\ell}
+M^{-1/8\ell}N^{-1/\ell}p^{3/8\ell+1/2\ell^2}\right.\\
&\qquad\qquad\left.+M^{-1/2\ell}N^{-1/\ell}p^{1/2\ell+1/2\ell^2}
+N^{-3/2\ell}p^{1/2\ell+1/\ell^2}\)p^{o(1)}.
\end{align*}
\end{theorem}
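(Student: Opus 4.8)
The plan is to run the amplification method of Kowalski, Michel and Sawin~\cite{KMS2} for bilinear forms with Kloosterman sums, substituting the bound of Theorem~\ref{thm:cong} at the combinatorial step. First I would replace the incomplete sum over the interval $\cN$ by a complete one: writing $\cN$ as a translate of $\{1,\ldots,N\}$ and detecting $n\in\cN$ by orthogonality of additive characters (equivalently, grouping terms according to the residue $w\equiv mn\bmod p$), one obtains $\SAMN=\sum_{w}K_r(w)f(w)$, where $f(w)=\sum_{m\in\cM,\,n\in\cN,\,mn\equiv w}\alpha_m$ is supported on the product set $\cM\cdot\cN$ and satisfies $\sum_w|f(w)|\le MN$ and, crucially,
\[
\sum_w|f(w)|^2\le\#\{(x,y,m,n)\in\{1,\ldots,N\}^2\times\cM^2:\ xm\equiv yn\bmod p\}=J(N,\cM)
\]
(the remark following~\eqref{eq:Basic} and the argument behind Theorem~\ref{thm:cong} cover translated intervals). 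This quantity $J(N,\cM)$ will be the combinatorial input.

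Next I would raise $\SAMN$ to the power $\ell$ and reorganize, as in~\cite{KMS2}, so as to complete the sum over the arbitrary set $\cM$ — this is the step producing a completion variable and $p^{o(1)}$ losses — and to isolate complete exponential sums of the shape
\[
\sum_{x\in\FF_p}\ \prod_{i=1}^{\ell/2}K_r(xn_i)\prod_{j=1}^{\ell/2}\overline{K_r(xn_j')}\ \e(hx),
\]
with $n_i,n_j'\in\cN$ and $h$ the completion variable; H\"older's inequality with exponent $\ell$ is applied here, and the hypothesis that $\ell$ be even is exactly what makes this $2(\ell/2)$-th power manoeuvre legitimate. Each such sum is the sum over $\FF_p$ of the trace function of a tensor product of pull-backs of Kloosterman sheaves twisted by an Artin--Schreier sheaf; by the results of~\cite{KMS2} (building on work of Katz and of Fouvry, Kowalski and Michel) this sheaf has conductor bounded solely in terms of $r$ and $\ell$ and is geometrically trivial only on an explicit exceptional locus. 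Hence, by Deligne's bound, each complete sum is $\ll p^{1/2}$ (with implied constant depending on $r$ and $\ell$) unless $(n_1,\ldots,n_{\ell/2},n_1',\ldots,n_{\ell/2}',h)$ lies on that locus, where instead it is $\ll p$.

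Bounding the generic contribution trivially, and bounding the contribution of the exceptional locus — which, once the sum over $\cM$ has been completed, is a count that one estimates by $J(N,\cM)$ — one arrives at an inequality of the form $|\SAMN|^{\ell}\ll\big(\mathcal D+\mathcal E\cdot J(N,\cM)^{\kappa}\big)p^{o(1)}$ for explicit $\mathcal D,\mathcal E$ and an explicit exponent $\kappa$. Since $M\le p^{1/3+o(1)}$, the estimate~\eqref{eq:large H} applies and gives $J(N,\cM)\ll N^2M^2p^{-1}+NMp^{o(1)}+M^2$; substituting this everywhere, using $(u+v+w)^{\lambda}\le u^{\lambda}+v^{\lambda}+w^{\lambda}$ for $0<\lambda\le1$, and taking an $\ell$-th root, the term $\mathcal D$ and the three terms of $J(N,\cM)$ combine into the four terms of the statement. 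The exponents $1/2\ell^2$ and $1/\ell^2$ appear precisely because one is extracting an $\ell$-th root of an expression already carrying a factor $p^{1/2\ell}$ or $p^{1/\ell}$, which is itself an $\ell$-th (or $(\ell/2)$-th) root of the $p^{1/2}$ coming from Deligne's bound.

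I expect the main obstacle to be the sheaf-theoretic step. One needs, for the precise tensor product of twisted dilated Kloosterman sheaves that occurs, both (i) a conductor bound uniform in $p$ — without it Deligne's inequality would not give $\ll p^{1/2}$ with a constant depending only on $r$ and $\ell$ — and (ii) a description of the exceptional locus sharp enough that, after the completion over $\cM$, the contribution of that locus is dominated by a $J(N,\cM)$-count and not by something larger. Both are supplied by~\cite{KMS2}; the work lies in verifying that the hypotheses of the relevant results there apply to $K_r$ in the required generality, and in checking that the incomplete-sum loss incurred in completing over the sparse set $\cM$ (whose sparseness, $M\le p^{1/3+o(1)}$, is what keeps the argument nontrivial) is exactly compensated by the saving that Theorem~\ref{thm:cong} affords in $J(N,\cM)$. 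The remaining manipulations — Cauchy--Schwarz, H\"older, and the final bookkeeping — are routine.
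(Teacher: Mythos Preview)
Your outline misidentifies the amplification step and, as a result, plugs in the wrong congruence count and an unwarranted hypothesis.

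The argument of~\cite{KMS2} does not ``complete the sum over the arbitrary set $\cM$''. One cannot usefully complete over an arbitrary set; the completion variable has to range over an interval. What actually happens is the Fouvry--Michel $ab$-shift: one introduces two integer parameters $A,B$ with $2AB\le N$, factors (morally) $n=ab$ with $a\sim A$, $b\sim B$, and arrives at
\[
|\SAMN|\ll\frac{\log p}{AB}\sum_{u\in\FF_p}\sum_{v\in\FF_p^*}\nu(u,v)\left|\sum_{b\sim B}\eta_b K_r\big(u(v+b)\big)\right|,
\]
where $\nu(u,v)$ counts $(a,m,n)\in\cA\times\cM\times\cN$ with $am\equiv u$, $av\equiv n\bmod p$. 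H{\"o}lder then gives $|\SAMN|^{2\ell}\le (AB)^{-2\ell}R_1^{2\ell-2}R_2 S_K\,p^{o(1)}$, with $R_1=\sum\nu$, $R_2=\sum\nu^2$, and $S_K=\sum_{u,v}\big|\sum_{b\sim B}\eta_b K_r(u(v+b))\big|^{2\ell}$. The deep input from~\cite{KMS2} is the bound $S_K\ll B^\ell p^2+B^{2\ell}p$ for even $\ell$; one then takes $B=\fl{p^{1/\ell}}$ and $A\asymp Np^{-1/\ell}$.

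The combinatorial input is therefore $R_2\ll N\,J(2A,\cM)$, not $J(N,\cM)$: the interval variable in the congruence count is the auxiliary $a\sim A$, of length $\asymp Np^{-1/\ell}$, and this is exactly where the exponents $1/2\ell^2$ and $1/\ell^2$ in the statement come from. Moreover, Theorem~\ref{thm:Kloost} carries no hypothesis $M\le p^{1/3+o(1)}$; one must use the full Theorem~\ref{thm:cong}, which gives the four-term bound
\[
J(2A,\cM)\ll A^2M^2p^{-1}+AM^{7/4}p^{-1/4}+AM+M^2
\]
(up to $p^{o(1)}$). Each of these four terms produces one of the four terms in the statement; in particular the term $AM^{7/4}p^{-1/4}$, coming from the middle case of Theorem~\ref{thm:cong}, is what yields the contribution $M^{-1/8\ell}N^{-1/\ell}p^{3/8\ell+1/2\ell^2}$. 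Your scheme, which uses only~\eqref{eq:large H} and splits the four terms as ``one diagonal plus three from $J$'', would miss this term and is not valid for $M>p^{1/3}$.
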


\begin{remark}
The results of~\cite{Shp} only apply to sums $\SAMN$ with $r=2$
(i.e., to classical one dimensional Kloosterman sums); they hold
arbitrary sets $\cM\subseteq \FF_p^*$ and intervals
$\cN\subseteq \FF_p^*$ of length $N<p$.
Moreover, in the special case that $\cM=\{1,\ldots,M\}$,~\cite[Theorem~2.1]{Shp} gives the strongest known bound
in the crucial range where $M$ and $N$ are both of size $p^{1/2+o(1)}$.
We remark, however, that if $M$ and $N$ are comparable in size,
then the bound of~\cite[Theorem~2.1]{Shp} is nontrivial
only if both quantities exceed $p^{4/9+\varepsilon}$ for
some fixed $\eps>0$, whereas Theorem~\ref{thm:Kloost} is nontrivial
once both quantities exceed $p^{1/3+\varepsilon}$.
\end{remark}

\begin{remark}
Assuming that $|\alpha_m|\le 1$ for all $m\in\cM$, the bounds 
of~\cite[Theorem~4.2]{KMS2}
imply that for any fixed integer
$\ell\ge 1$, the bound
\begin{equation}
\label{eq:KMS Bound}
|\SAMN|\le MN\(M^{-1/2\ell}N^{-1/\ell}p^{1/2\ell+1/2\ell^2}\)p^{o(1)}
\end{equation}
holds provided that the integers $N$ and the set $\cM\subseteq \FF_p^*$
satisfy at least one of the two conditions
\begin{equation}
\label{eq:KMS Cond 1}
p^{1/\ell} \le N \le \tfrac12 p^{1/2 + 1/ 2\ell}
\end{equation}
or 
\begin{equation}
\label{eq:KMS Cond 2}
p^{1/\ell}\le N\mand NM^+\le \tfrac12 p^{1+1/2\ell},
\end{equation}
where (recalling the convention~\eqref{eq:Fp repr}) we denote
$$
M^+=\max_{m \in \cM} m.
$$
In fact, the proof of~\cite[Theorem~4.2]{KMS2} can be easily
extended to cover arbitrary intervals $\cN=\{s+1,\ldots, s+N\}$ 
(not only initial intervals of the form
$\{1,\ldots,N\}$) and arbitrary sets
$\cM\subseteq \FF_p^*$.  Indeed, in the case~\eqref{eq:KMS Cond 1}
it is enough to use a result of Ayyad, Cochrane and
Zheng~\cite[Theorem~1]{ACZ} in the appropriate place
(where the congruence $a_1n_2 \equiv a_2 n_1 \bmod p$ is replaced with 
an equation $a_1n_2 =a_2 n_1$ over $\ZZ$), whereas in
the case~\eqref{eq:KMS Cond 2} no changes are required. 

Furthermore, under~\eqref{eq:KMS Cond 1} the argument in the proof 
of~\cite[Theorem~4.2]{KMS2} applies to arbitrary sets $\cM$, but in the case 
of  the condition~\eqref{eq:KMS Cond 2}  the restriction on the size of 
$M^+$ is crucial. 

Our Theorem~\ref{thm:Kloost} gives a somewhat weaker bound
than~\eqref{eq:KMS Bound}, but it applies in greater generality without 
any restriction on $M^+$.

To illustrate this, let $\cM\subseteq \FF_p^*$ be such that
$M^+>p^{1/2}$; then both~\eqref{eq:KMS Cond 1}
and~\eqref{eq:KMS Cond 2} require that
$N\le\tfrac12 p^{1/2+1/2\ell}$. If $N=p^{15/26+o(1)}$ (say),
then the bound~\eqref{eq:KMS Bound} only applies if $\ell\le 6$,
so it yields a nontrivial bound only if
$$
MN^2\ge p^{1+1/\ell+o(1)}\ge p^{7/6+o(1)}\qquad(\ell\le 6);
$$
this requires that $M\ge p^{1/78+o(1)}$.
On the other hand, Theorem~\ref{thm:Kloost},
for an appropriate choice of $\ell$ (which is
not restricted by any conditions similar to~\eqref{eq:KMS Cond 1} and~\eqref{eq:KMS Cond 2})
provides a nontrivial bound already for $M\ge p^{\varepsilon}$.
\end{remark}

\begin{remark} As in~\cite{KMS1,KMS2}, we expect that Theorem~\ref{thm:Kloost}
can be extended to a broad class of trace functions satisfying suitable
``big monodromy'' assumptions and other natural hypotheses.
\end{remark}

\section{Preparations} 

\subsection{Character sums}

For any $z\in\FF_p$ we have the
well known orthogonality relation (see~\cite[Section~3.1]{IwKow}):
\begin{equation}
\label{eq:Orth}
\sum_{\chi\in\cX}\chi(z)=\begin{cases}
p-1&\quad\text{if $z=1$},\\
0&\quad\text{otherwise}.
\end{cases}
\end{equation}
Using~\eqref{eq:Orth} we derive that
\begin{equation}
\label{eq:N char sum}
\begin{split}
J(H,\cM)&=
\sum_{x,y=1}^H \sum_{m,n\in\cM}
\frac{1}{p-1}\sum_{\chi\in\cX}\chi(xy^{-1}mn^{-1})\\
&=\frac{1}{p-1}\sum_{\chi\in\cX}
\left|\sum_{x=1}^H\chi(x)\right|^2~
\left|\sum_{m\in\cM}\chi(m)\right|^2,
\end{split}
\end{equation}
which is used in our proof of Theorem~\ref{thm:cong}.

We also need the Weil bound on multiplicative character sums
in the following form; see~\cite[Theorem~11.23]{IwKow}.

\begin{lemma}
\label{lem:Weil}
For square-free and coprime polynomials $f,g\in\FF_p[X]$  we have
$$
\max_{\chi\in\cX^*}\bigg|\sum_{a\in\FF_p}
\chi(f(a))\cdot\overline\chi(g(a))\bigg|\ll\(\deg f+\deg g\)p^{1/2}.
$$
\end{lemma}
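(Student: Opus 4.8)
This is the classical multiplicative‑character form of the Weil bound; my plan is to reduce the two‑polynomial sum to a single‑polynomial character sum and then invoke the standard estimate for such sums, which is a consequence of the Riemann Hypothesis for curves over $\FF_p$.

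First I would absorb $\overline\chi\circ g$ into the argument. Let $d>1$ be the order of $\chi$ and keep the convention $\chi(0)=0$; since $\chi(g(a))^{d}=1$ whenever $g(a)\ne0$, set $h:=fg^{\,d-1}$. Then $\chi(f(a))\,\overline{\chi(g(a))}=\chi(h(a))$ for every $a\in\FF_p$ — this reads $\chi(f(a))\chi(g(a))^{-1}=\chi(f(a))\chi(g(a))^{d-1}$ when $f(a)g(a)\ne0$, and both sides vanish otherwise — so the sum in the lemma equals $\sum_{a\in\FF_p}\chi(h(a))$. Next I would verify the only non‑trivial point, the non‑degeneracy of $h$: putting $N=\deg f+\deg g$ (and assuming $N\ge1$, the bound being vacuous otherwise), the square‑freeness and coprimality of $f,g$ imply that $h$ has exactly $N$ distinct roots in $\overline{\FF}_p$ and that $h$ is not of the form $c\,u(X)^{d}$ with $c\in\FF_p^*$, $u\in\FF_p[X]$, because an irreducible factor of $f$ divides $h$ exactly once if $\deg f\ge1$ while an irreducible factor of $g$ divides $h$ to the power $d-1$ if $\deg f=0$, and neither $1$ nor $d-1$ is divisible by $d$.

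With the sum now in the shape $\sum_{a\in\FF_p}\chi(h(a))$ and $h$ non‑degenerate, I would apply the textbook Weil bound $\bigl|\sum_{a\in\FF_p}\chi(h(a))\bigr|\ll(\deg f+\deg g)\,p^{1/2}$ (often with the explicit constant $\deg f+\deg g-1$; see~\cite[Theorem~11.23]{IwKow}), which is the lemma. If one prefers a derivation to a citation, the estimate comes from the $L$‑function $L(\chi,T)=\prod_i(1-\alpha_iT)$ attached to $\chi$ and $h$ (equivalently, from the relevant Frobenius eigenspace on the curve $y^{d}=h(x)$): the non‑degeneracy of $h$ makes $L(\chi,T)$ a polynomial in $T$ of degree at most $N$, Weil's Riemann Hypothesis for curves gives $|\alpha_i|=p^{1/2}$ for all $i$, and the logarithmic‑derivative (``explicit formula'') identity gives $\sum_{a\in\FF_p}\chi(h(a))=-\sum_i\alpha_i$; these combine to the bound. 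Alternatively one can count $\FF_p$‑points on the smooth projective model of $y^{d}=h(x)$, whose number differs from $p+1$ by at most $2\mathfrak g\,p^{1/2}$ with the genus bound $\mathfrak g\le(d-1)(N-1)/2$ coming from Riemann--Hurwitz for the degree‑$d$ projection $x$, and then isolate $\chi$'s contribution among those of the $d-1$ Galois‑conjugate characters $\psi$ with $\psi^{d}=\chi_0$, $\psi\ne\chi_0$.

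The only step with any real content is this last one: the reduction and the non‑degeneracy check are elementary (the latter being exactly where ``square‑free'' and ``coprime'' are used), so the whole weight of the proof rests on the Riemann Hypothesis for curves over $\FF_p$. In practice one imports that theorem as a black box via~\cite[Theorem~11.23]{IwKow} (or Weil's original work, or Lidl and Niederreiter's book); a genuinely from‑scratch proof would require developing the theory of zeta functions of curves, or the $\ell$‑adic cohomology of $y^{d}=h(x)$, which is standard but well beyond what is needed here.
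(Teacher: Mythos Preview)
Your argument is correct and in fact supplies more than the paper does: the paper offers no proof at all, simply citing \cite[Theorem~11.23]{IwKow}. Your substitution $h=fg^{d-1}$ together with the non-degeneracy check is exactly the reduction needed to pass from the two-polynomial sum stated here to the single-polynomial Weil bound in that reference, so your approach and the paper's are aligned. One small correction: the case $N=\deg f+\deg g=0$ is not ``vacuous'' as you write---when $f$ and $g$ are nonzero constants the sum has modulus $p$ and the inequality as stated fails---but this degenerate case never arises in the paper's sole application of the lemma (in the proof of Theorem~\ref{thm:char-3}), where the two index sets are assumed distinct.
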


\subsection{Recursive inequality}

In the proof of Theorem~\ref{thm:cong}, the following lemma is used to bound
$J(H,\cM)$ in terms of $J(K,\cM)$ with a suitably chosen $K$.

\begin{lemma}
\label{lem:HtoK}
For any integers $H,K\in[1,p)$ we have the 
uniform estimate
\begin{equation}
\label{eq:JHMJKM}
KJ(H,\cM)=HJ(K,\cM)+O\((H+K)(HKp^{-1}+1)M^2\).
\end{equation}
\end{lemma}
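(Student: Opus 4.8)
The plan is to count solutions of the congruence~\eqref{eq:Cong} with the $x,y$-variables drawn from a box of side $H+K$ in two different ways, exploiting the fact that an interval of length $H$ and one of length $K$ "fit together" additively. Concretely, for $u\in[1,K]$ and $v\in[1,H]$ the sum $u+v$ ranges over $[2,H+K]$, and a fixed integer $t$ in that range is represented as $u+v$ exactly $r(t)$ times, where $r(t)=\#\{(u,v):1\le u\le K,\ 1\le v\le H,\ u+v=t\}$. The key numerical facts are $\sum_t r(t)=HK$ and $r(t)\le\min\{H,K\}\le H+K$ for every $t$, together with $r(t)=H$ for $t$ in a central subinterval when (say) $K\ge H$. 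I would introduce the weighted count
$$
T=\sum_{\substack{x,x'\in[1,K],\ y,y'\in[1,H]\\ (x+y)m\equiv(x'+y')n\bmod p}}1
=\sum_{m,n\in\cM}\ \sum_{t,t'}r(t)r(t')\,\mathbf 1\{tm\equiv t'n\bmod p\},
$$
and estimate $T$ from both sides.

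First I would get the \emph{lower} bound. Writing $\mathbf 1\{tm\equiv t'n\}$ and using $r(t)\ge 0$, restrict $t$ to the central interval where $r(t)=H$ (assuming WLOG $K\ge H$; the other case is symmetric) and drop all other terms; there $r(t)r(t')\ge H\cdot r(t')$ is too lossy, so instead I would use the cruder route: replace one factor $r(t)$ by its full range. Actually the clean approach is to open only one interval: set $T'=\sum_{x\in[1,K]}\sum_{y\in[1,H]}\sum_{m,n}\mathbf 1\{(x+y)m\equiv \cdot\}$—but we need a bilinear structure. So the correct device is: for each \emph{fixed} pair $(x_0,x_0')\in[1,K]^2$, the inner count over $y,y'\in[1,H]$, $m,n$ is $J(H,\cM)$ only after the shift $m\mapsto$, which does not preserve $\cM$. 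This is exactly the obstruction (see below), so instead I count $T$ with both $x,y$ intervals genuinely present and compare $T$ to $H^2J(K,\cM)$ and to $K^2 J(H,\cM)$ via the representation-function identity: since $r(t)=H$ on an interval of length $|K-H|+1\ge 1$ and more usefully $\sum_{t}r(t)\mathbf 1\{tm\equiv t'n\}$ relates the length-$(H+K)$ count to weighted ones.

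Let me restate the actual plan cleanly. Consider $I(L,\cM)=\#\{(z,z',m,n):z,z'\in[1,L],\ m,n\in\cM,\ zm\equiv z'n\}$, so $I=J$. The identity I want comes from writing, for $z\in[1,H+K-1]$, the convolution $r=\mathbf 1_{[1,K]}*\mathbf 1_{[1,H]}$, giving
$$
\sum_{z,z'\in[2,H+K]}r(z)r(z')\,\mathbf 1\{zm\equiv z'n\}
=\sum_{\substack{x,x'\in[1,K]\\y,y'\in[1,H]}}\mathbf 1\{(x+y)m\equiv(x'+y')n\}.
$$
The main term is extracted by replacing $\mathbf 1\{\cdots\}$ with $\tfrac1{p}+(\mathbf 1-\tfrac1p)$: the $\tfrac1p$ part contributes $\tfrac1p(\sum_z r(z))^2 M^2=\tfrac{(HK)^2M^2}{p}$ to the left and $\tfrac{(HK)^2M^2}{p}$ to the right, matching, and cancels. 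For the remaining "error" part one uses $0\le r(z)\le H+K$ to bound the left side between $(H+K)^{-2}$- and trivially, and on the right one notes the $(x,x')$-sum factors. The honest mechanism is simpler: since $r(z)\ge H$ for $z$ in an interval $\mathcal J$ of length $\asymp\max\{H,K\}\ge H$, one gets $T\ge H^2\sum_{z,z'\in\mathcal J}\mathbf 1\{\cdots\}$, and since $r(z)\le H$ always (when $K\ge H$) wait $r(z)\le K$; in all cases $r(z)\le H+K$. Combining the easy upper bound $T\le (H+K)^2 J(H+K,\cM)$ is circular, so the right tool is: on the \emph{right} side, fix $x-x'=d$; then $(x+y)m\equiv(x'+y')n$ becomes, after absorbing, a count controlled by $J(K,\cM)$ and $J(H,\cM)$ with an error from the diagonal $d=0$ of size $O(KM^2)$ and an off-diagonal whose total over $d$ gives the cross term $HJ(K,\cM)$ plus $KJ(H,\cM)$ minus the main term, all with error $O((H+K)(HKp^{-1}+1)M^2)$. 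I would make this precise by the substitution trick: bound both $KJ(H,\cM)$ and $HJ(K,\cM)$ against the \emph{same} quantity $\tfrac1{p-1}\sum_{\chi}|\sum_{z\in[1,H]}\chi(z)|\,|\sum_{w\in[1,K]}\chi(w)|\,|\sum_{m}\chi(m)|^2$ using~\eqref{eq:N char sum} and Cauchy–Schwarz, but that loses the exact main term; so ultimately the cleanest route is the completing-the-interval count above with careful bookkeeping of $r(t)$.

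The step I expect to be the main obstacle is extracting the exact main term $HJ(K,\cM)$ (rather than $\asymp$) from the mixed count, because the representation function $r(t)$ is only piecewise constant: away from its plateau it falls off linearly, producing genuine boundary contributions of size $O((H+K)\cdot(\text{length})\cdot M^2)=O((H+K)^2M^2)$ which must be shown to be absorbed into the stated error $O((H+K)(HKp^{-1}+1)M^2)$—this is fine since $(H+K)^2M^2\le(H+K)(HKp^{-1}+1)M^2$ fails in general, so one must instead only ever use $r$ on its plateau (paying a factor bounded by $\min\{H,K\}+1$, hence the $(H+K)(\cdots+1)M^2$ term) and handle the off-plateau part by the trivial bound $J(L,\cM)\le L^2M^2/p+O((L^2+\text{?})M^2)$ together with the fact that on a short interval of length $O(|H-K|)$ one has $O(|H-K|\cdot(H+K)M^2)$, reconciled with $|H-K|\le H+K$. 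I would double-check the constants by testing the extreme cases $H=K$ (where~\eqref{eq:JHMJKM} reads $0=O(H(H^2p^{-1}+1)M^2)$, consistent with the trivial bound $J(H,\cM)\ll H^2M^2p^{-1}+HM^2$) and $K=1$ (where it should reduce to the elementary estimate $J(H,\cM)=HJ(1,\cM)+O(H(Hp^{-1}+1)M^2)$, i.e. $J(H,\cM)\ll H M^2 p^{-1}\cdot$—wait $J(1,\cM)=M$—so $J(H,\cM)= HM+O(\cdots)$??)—this sanity check will pin down whether an extra factor of $H$ or $M$ is hiding, and I would adjust the argument accordingly before writing the full proof.
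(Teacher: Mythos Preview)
Your proposal does not reach a complete argument, and the underlying strategy has a structural mismatch with the problem. The congruence $xm\equiv yn\bmod p$ is \emph{multiplicative} in $x,y$, so the additive convolution $r=\mathbf 1_{[1,K]}*\mathbf 1_{[1,H]}$ does not interact well with it: as you yourself notice, shifting $x\mapsto x+y$ forces a change of variable in $m,n$ that does not preserve $\cM$, and restricting to the plateau of $r$ produces counts over a \emph{shifted} interval $[K+1,H+1]$ of length $H-K+1$, which is neither $J(H,\cM)$ nor $J(K,\cM)$. The several restarts in your write-up all hit this same wall, and the character-sum route with Cauchy--Schwarz you briefly contemplate gives only an inequality, not the two-sided estimate~\eqref{eq:JHMJKM}.

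The paper's proof proceeds by a completely different mechanism: geometry of numbers. For each fixed pair $(m,n)\in\cM^2$ one works with the lattice $\Lambda(m,n)=\{(x,y)\in\ZZ^2:xm\equiv yn\bmod p\}$ of determinant $p$ and picks a reduced basis $\vec e,\vec f$ with $\|\vec e\|\le\|\vec f\|$ and $\|\vec e\|\,\|\vec f\|\asymp p$. Pairs $(m,n)$ for which $\|\vec f\|\ll H$ contribute $O(H^2/p)$ points in $[1,H]^2$ and $O(HK/p+1)$ points in $[1,K]^2$, accounting for the error term. For the remaining pairs, every lattice point in $[1,H]^2$ is a multiple of $\vec e$ alone, so the count is exactly $\lfloor H/\|\vec e\|_\infty\rfloor=H\|\vec e\|_\infty^{-1}+O(1)$, and likewise with $K$. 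Summing over $(m,n)$ gives
\[
J(H,\cM)=H\sum_{(m,n)\in\sM}\|\vec e_{m,n}\|_\infty^{-1}+O(\cdots),\qquad
J(K,\cM)=K\sum_{(m,n)\in\sM}\|\vec e_{m,n}\|_\infty^{-1}+O(\cdots),
\]
and the common sum cancels upon forming $KJ(H,\cM)-HJ(K,\cM)$. The key point---that for most $(m,n)$ the solution count scales \emph{exactly linearly} in the side length---is a lattice fact with no additive analogue, and is what your approach is missing.

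Incidentally, your $K=1$ sanity check is fine: $J(1,\cM)=M$, and the lemma then reads $J(H,\cM)=HM+O(H^2M^2p^{-1}+HM^2)$, which is consistent since trivially $J(H,\cM)\le HM^2$.
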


\begin{proof}
By symmetry, we can assume that $K\le H$.

For a fixed pair $(m,n)\in\cM^2$, let $\Lambda(m,n)$ be the lattice
defined by
$$
\Lambda(m,n) = \{(x,y) \in\ZZ^2: xm\equiv yn\bmod p\}, 
$$
which clearly has determinant
$\det\Lambda(m,n)=p$. 
By~\cite[Lemma~1]{H-B1} there are
basis vectors $\vec{e},\vec{f}\in\Lambda(m,n)$ 
such that the Euclidean lengths $\| \vec{e}\|$ and $\| \vec{f}\|$ satisfy
\begin{equation}
\label{eq:small basis}
\| \vec{e}\| \le \| \vec{f}\| \mand p \ll \| \vec{e}\|   \| \vec{f}\| \ll p
\end{equation}
and such that for any vector $\vec{u}=a\,\vec{e}+b\,\vec{f}$ with $a,b\in\ZZ$
one has
\begin{equation}
\label{eq:small coeffs}
|a| \le \frac{c_0\|\vec{u}\|}{\|\vec{e}\|}\mand |b|\le \frac{c_0\|\vec{u}\|}{\|\vec{f}\|}  
\end{equation}
with some absolute constant $c_0>0$. 

First, we consider pairs $(m,n)$ in $\cM^2$ for which there exist
$\vec{e},\vec{f}\in\Lambda(m,n)$ as above
with $\|\vec{f}\|\le 2^{1/2}c_0 H$.
Using~\eqref{eq:small basis} and~\eqref{eq:small coeffs},
the total number of
vectors $\vec{u}=(x,y)\in\Lambda(m,n)$ with $x,y\in[1,H]$ is at most 
$$
 \(\frac{2^{3/2}c_0H}{\|\vec{e}\|}+1\)
\(\frac{2^{3/2}c_0H}{\|\vec{f}\|}+1\)
\ll\frac{H^2}{\|\vec{e}\|\|\vec{f}\|}\ll \frac{H^2}{p}.
$$
It follows that the contribution to $J(H,\cM)$
from all such pairs is $O(H^2M^2 p^{-1})$.
Similarly, the number of
vectors $\vec{u}=(x,y)\in\Lambda(m,n)$ with $x,y\in[1,K]$ is at most 
$$
 \(\frac{2^{3/2}c_0K}{\|\vec{e}\|}+1\)
\(\frac{2^{3/2}c_0K}{\|\vec{f}\|}+1\)
\ll\frac{K^2}{\|\vec{e}\|\|\vec{f}\|}
+\frac{K}{\|\vec{e}\|}+\frac{K}{\|\vec{f}\|}+1\ll \frac{HK}{p}+1
$$
since
$$
\frac{K}{\|\vec{f}\|}\le\frac{K}{\|\vec{e}\|}\mand
\frac{K}{\|\vec{e}\|}\ll\frac{K\|\vec{f}\|}{p}\ll\frac{HK}{p}.
$$
Hence, the contribution to $J(K,\cM)$ from all such pairs
is $O(HKM^2 p^{-1}+M^2)$.

Let $\sM$ be the set formed from the remaining pairs $(m,n)$ in $\cM^2$.
For each pair in $\sM$, fix a choice of basis vectors $\vec{e},\vec{f}\in\Lambda(m,n)$
satisfying~\eqref{eq:small basis} and~\eqref{eq:small coeffs}.
Taking into account that $\|\vec{f}\|>2^{1/2}c_0H$, if
$$
\vec{u}=(x,y)=a\,\vec{e}+b\,\vec{f}\in\Lambda(m,n)
$$
for some $x,y\in[1,H]$ and $a,b\in\ZZ$,
then $b=0$ by~\eqref{eq:small coeffs}; in other words,
\begin{equation}
\label{eq:uae}
\vec{u} = a\,\vec{e} \in [1,H]^2. 
\end{equation}
Writing $\e=(r,s)$, this implies that the numbers $r,s$ have
the same sign, and $rs\ne 0$. Replacing $\vec{e}$ by $-\vec{e}$ we can assume
 $r,s\ge 1$. Now one sees easily that the number of vectors $\vec{u}$ satisfying
\eqref{eq:uae} is precisely $\fl{H/\|\vec{e}\|_\infty}$,
where $\|\vec{e}\|_\infty=\max\{r,s\}$.

We now use  $\vec{e}_{m,n}$ to denote the corresponding vector $\vec{e}$ coming from a given 
pair $(m,n)\in\sM$. 
Thus, the contribution to $J(H,\cM)$ from the pairs in $\sM$ is
$$
\sum_{(m,n)\in\sM} \fl{H/\|\vec{e}_{m,n}\|_\infty} = 
H\sum_{(m,n)\in\sM}\|\vec{e}_{m,n}\|_\infty^{-1}+O(M^2).
$$
Since $H\ge K$, a similar result holds with $H$ replaced by $K$.

Combining all of the above results, we deduce the following estimates:
\begin{align*}
KJ(H,\cM)&=HK\sum_{(m,n)\in\sM}\|\vec{e}_{m,n}\|_\infty^{-1}+O(H^2KM^2 p^{-1}+KM^2),\\
HJ(K,\cM)&=HK\sum_{(m,n)\in\sM}\|\vec{e}_{m,n}\|_\infty^{-1}+O(H^2KM^2 p^{-1}+HM^2),
\end{align*}
and these imply~\eqref{eq:JHMJKM} for $K\le H$.
\end{proof}

\section{Proofs of Main Results}

\subsection{Proof of Theorem~\ref{thm:cong}}

We first bound $J(K,\cM)$ with a suitably
chosen $K$, and then we apply Lemma~\ref{lem:HtoK}.

Replacing $H$ with $K$ in~\eqref{eq:N char sum}
and using the Cauchy inequality, we derive that
\begin{align*}
J(K,\cM)^2 
&\le\frac{1}{(p-1)^2}\sum_{\chi\in\cX}
\left|\sum_{m\in\cM}\chi(m)\right|^2
\sum_{\chi\in\cX}
\left|\sum_{x=1}^K\chi(x)\right|^4 
\left|\sum_{m\in\cM}\chi(m)\right|^2\\
&=\frac{M}{p-1} \sum_{\chi\in\cX}
\left|\sum_{x=1}^K\chi(x)\right|^4 
\left|\sum_{m\in\cM}\chi(m)\right|^2,
\end{align*}
where we have used~\eqref{eq:Orth} in the second step.
Since $\chi$ is multiplicative this implies
\begin{equation}
\label{eq:N2 K2}
J(K,\cM)^2 \ll \frac{M}{p} 
\sum_{\chi\in\cX} \left|\sum_{u=1}^L \rho(u)\chi(u)\right|^2~
\left|\sum_{m\in\cM}\chi(m)\right|^2, 
\end{equation}
where $L=K^2$, and $\rho(u)$ is the number of pairs $(x,y)\in[1,K]^2$
such that $u=xy$. 
Taking into account~\eqref{eq:Orth} once again, we see that 
\begin{equation}
\label{eq:K2 N}
\sum_{\chi\in\cX} \left|\sum_{u=1}^L \rho(u)\chi(u)\right|^2~
\left|\sum_{m\in\cM}\chi(m)\right|^2 = p N, 
\end{equation}
where
$$
N=\sum_{\substack{(u,v,m,n)\\um\equiv vn\bmod p}}\rho(u)\rho(v).
$$
Since $L\le p^2$, using a trivial bound on the divisor function
(see, for example,~\cite[Equation~(1.81)]{IwKow}) we have
$$
N\le J(L,\cM) p^{o(1)}\qquad (p\to\infty).
$$
Combining this bound with~\eqref{eq:N2 K2} and~\eqref{eq:K2 N}, 
it follows that
$$
J(K,\cM)  \ll J(L,\cM)^{1/2} M^{1/2}p^{o(1)},
$$
and so by Lemma~\ref{lem:HtoK} it follows that
\begin{equation}
\label{eq:NL}
J(H,\cM)\ll HK^{-1}M^{1/2}J(L,\cM)^{1/2}p^{o(1)}+(H/K+1)(HKp^{-1}+1)M^2.
\end{equation}

In the case that $H \ge p^{2/3}$, we take $K=\fl{H^{1/2}}$.
Since $L\le H$, we conclude from~\eqref{eq:NL} that
$$
J(H,\cM)\ll H^2M^2p^{-1}+H^{1/2}M^{1/2}J(H,\cM)^{1/2}p^{o(1)},
$$
which implies that
\begin{equation}
\label{eq:scantheroom}
J(H,\cM)\ll H^2M^2 p^{-1} + H Mp^{o(1)}\qquad(H \ge p^{2/3}).
\end{equation}
This proves the theorem in this case.

Next, suppose that $H<p^{2/3}$ and $M\ge p^{1/3}$.
Let $K=\rf{M^{1/4}p^{1/4}}$. Since $K\ge p^{1/3}$ and so
$L\ge p^{2/3}$, using~\eqref{eq:scantheroom} with $H=L$ we see that
$$
J(L,\cM)\ll K^4M^2p^{-1}+K^2Mp^{o(1)}\ll M^3p^{o(1)},
$$
hence by~\eqref{eq:NL} we have
\begin{align*}
J(H,\cM)&\ll HK^{-1}M^2p^{o(1)}+(H/K+1)(HKp^{-1}+1)M^2\\
&\ll H^2M^2p^{-1}+HM^{7/4}p^{-1/4+o(1)}
+HM^{9/4}p^{-3/4}+M^2.
\end{align*}
The third term is dominated by the second term since $M<p$,
thus it can be dropped, and the theorem is proved in this case.

Finally, suppose that $H<p^{2/3}$ and $M<p^{1/3}$.
Put $K=\rf{p^{1/3}}$. Since $L\ge p^{2/3}$, using~\eqref{eq:scantheroom}
with $H=L$ it follows that
$$
J(L,\cM)\ll Mp^{2/3+o(1)},
$$
hence by~\eqref{eq:NL}, our choice of $K$, and the fact
that $HM<p$, the theorem follows in this case. This concludes the proof.

\subsection{Proof of Theorem~\ref{thm:char-3}}
For each $\lambda\in\FF_p^*$ let
$$
\vartheta(\lambda)=\left|\{(h,m)\in[1,H]\times\cM:
hm^{-1}\equiv\lambda\bmod p\}\right|. 
$$
Then, using the multiplicativity of $\chi$, we have
\begin{align*}
\left|W_\chi(H,\cK,\cM;\bfalpha,\bfzeta,\bfeta)\right|
&\le\sum_{m\in\cM}\sum_{h=1}^H\left|\sum_{k\in\cK}
\zeta_k\chi(hm^{-1}+k)\right|\\
&\le\sum_{\lambda\in\FF_p^*}
\vartheta(\lambda)\left|\sum_{k\in\cK}\zeta_k\chi(\lambda+k)\right|.
\end{align*}
Clearly,
$$
\sum_{\lambda\in \FF_p^*} \vartheta(\lambda) = H M \mand
\sum_{\lambda\in \FF_p^*} \vartheta(\lambda)^2 = J(H,\cM).
$$
For any fixed integer $\ell\ge 1$ we write
$$
\vartheta(\lambda)=\vartheta(\lambda)^{(\ell-1)/\ell}\cdot
\(\vartheta(\lambda)^2\)^{1/2\ell}
$$
and applying the H{\"o}lder inequality, obtaining
\begin{align*}
&\left|W_\chi(H,\cK,\cM;\bfalpha,\bfzeta,\bfeta)\right|^{2\ell}\\
&\qquad\quad\le(HM)^{2\ell-2}J(H,\cM)
\sum_{\lambda\in\FF_p^*}\left|\sum_{k\in\cK}\zeta_k\chi(\lambda+k)\right|^{2\ell}\\
&\qquad\quad=(HM)^{2\ell-2}J(H,\cM)\sum_{j_1,k_1,\ldots,j_\ell,k_\ell\in\cK}
\prod_{i=1}^\ell\zeta_{j_i}\overline\zeta_{k_i}\cdot 
\sum_{\lambda\in\FF_p^*}\prod_{i=1}^\ell
\chi(\lambda+j_i)\overline\chi(\lambda+k_i)\\
&\qquad\quad\le(HM)^{2\ell-2}J(H,\cM)\sum_{j_1,k_1,\ldots,j_\ell,k_\ell\in\cK}
\left|\sum_{\lambda\in\FF_p^*}\prod_{i=1}^\ell
\chi(\lambda+j_i)\overline\chi(\lambda+k_i)\right|,
\end{align*}
where $\overline\chi$ is the conjugate character. 

We now apply Lemma~\ref{lem:Weil} to the sums over $\lambda$ 
when the sets 
$$\{j_1,   \ldots, j_\ell\} \mand \{k_1,   \ldots, k_\ell\}
$$ 
are different, and we use the trivial bound otherwise; this leads to
the bound 
$$
W_\chi(H,\cK,\cM;\bfalpha,\bfzeta,\bfeta)^{2\ell}
\ll(HM)^{2\ell-2}J(H,\cM)(K^{2\ell}p^{1/2}+K^\ell p).
$$
Since $M \le p^{1/3+o(1)}$,  we can apply~\eqref{eq:large H} to derive that 
\begin{align*}
&W_\chi(H,\cK,\cM;\bfalpha,\bfzeta,\bfeta)^{2\ell}\\
&\qquad\qquad\le (HKM)^{2\ell}(p^{-1}+H^{-1}M^{-1}+H^{-2})
(p^{1/2}+K^{-\ell}p)p^{o(1)}, 
\end{align*}
and the result follows. 

\subsection{Proof of Theorem~\ref{thm:Kloost}}

In what follows, we use notation of the form $x\sim X$ as an
abbreviation for $X<x\le 2X$.

To prove Theorem~\ref{thm:Kloost}, we follow the strategy
of the proof of~\cite[Theorem~4.2]{KMS2}, which is summarised in~\cite[\S4.2]{KMS2}.
Let $A$ and $B$ be integer parameters for which 
\begin{equation}
\label{eq:AB N}
2AB\le N.
\end{equation}
Then, as in~\cite[Section~4.2]{KMS2}
and~\cite[Chapter~IV]{FM98} we have
\begin{align*}
|\SAMN|\ll\frac{\log p}{AB}
\sum_{u\in\FF_p}\sum_{v\in\FF_p^*}\nu(u,v)
\left|\sum_{b\sim B}\eta_bK_r(u(v+b))\right|
\end{align*}
with some complex weights $\bfeta=(\eta_b)_{b\sim B}$
of absolute value one, and
\begin{align*}
\nu(u,v)&=\sssum_{\substack{a\sim A,\,m\in\cM,\,n\in[1,N]\\
am\equiv u\bmod p,\;av\equiv n\bmod p}}|\alpha_m|\\
&\le\big|\{(a,m,n)\in\cA\times\cM\times\cN:am\equiv u\bmod p, 
\ av\equiv n\bmod p\}\big|, 
\end{align*}
where $\cA$ denotes the set of integers $a\sim A$.

Following~\cite[Section~4.2]{KMS2} (see also the proof of Theorem~\ref{thm:char-3}), we use the H{\"o}lder inequality
to derive that
\begin{equation}
\label{eq:S nu K}
|\SAMN|^{2\ell}\le\frac{S_1^{2\ell-2}S_2S_K}{(AB)^{2\ell}}p^{o(1)}
\end{equation}
for any fixed integer $\ell\ge 1$, where 
$$
R_1=\sum_{u\in\FF_p}\sum_{v\in\FF_p^*}\nu(u,v) \mand 
R_2=\sum_{u\in\FF_p}\sum_{v\in\FF_p^*}\nu(u,v)^2
$$
and 
$$
S_K=\sum_{u\in\FF_p}\sum_{v\in\FF_p^*}
\left|\sum_{b\sim B}\eta_bK_r(u(v+b))\right|^{2\ell}.
$$ 
As in~\cite{KMS2} we have trivially
\begin{equation}
\label{eq:nu1}
R_1\ll AMN,
\end{equation}
and also
\begin{align*}
R_2\ll
&\big|\{(a_1,a_2,m_1,m_2,n_1,n_2)
\in\cA^2\times\cM^2\times\cN^2:\\
&\qquad\qquad
a_1m_1\equiv a_2m_2\bmod p,~a_1n_1\equiv a_2n_2\bmod p\}\big|. 
\end{align*}
There are at most $J(2A,\cM)$ solutions
$(a_1,a_2,m_1,m_2)\in\cA^2\times\cM^2$ to the congruence
$a_1m_1 \equiv a_2m_2 \bmod p$,
and for any such solution, there are at most $N$ pairs
$(n_1,n_2)\in\cN^2$ such that
$a_1n_1\equiv a_2n_2\bmod p$; therefore,
\begin{equation}
\label{eq:nu2}
R_2\ll N J(2A,\cM).
\end{equation}
Substituting~\eqref{eq:nu1} and~\eqref{eq:nu2}
in~\eqref{eq:S nu K}, we obtain that
\begin{equation}
\label{eq:S Sigma}
|\SAMN|^{2\ell}
\le A^{-2}B^{-2\ell}M^{2\ell-2}N^{2\ell-1} J(2A,\cM)S_Kp^{o(1)}.
\end{equation}

We turn our attention to the sum $S_K$. Estimating $S_K$
lies at the heart of the method of~\cite[Section~4.2]{KMS2},
where the bound
\begin{equation}
\label{eq:Sigma gamma}
S_K\ll B^\ell p^2 + B^{(2-\gamma) \ell} p^{3/2} + B^{2\ell} p
\end{equation}
with some fixed $\gamma\ge 0$
is derived from~\cite[Lemma~2.3]{KMS2} and~\cite[Theorem~4.4]{KMS2}.
Moreover, from the statement and proof of~\cite[Theorem~4.3]{KMS2}
we see that the way $\gamma$ is defined, 
the product 
$\gamma\ell$ is an integer not less than $(\ell-1)/2$; for even $\ell$
this implies that $\gamma\ge 1/2$, and so
$$
B^{(2-\gamma)\ell}p^{3/2}\le B^{3\ell/2}p^{3/2}
\le\max\{B^\ell p^2,B^{2\ell}p\}.
$$
Thus, \eqref{eq:Sigma gamma} simplifies to 
\begin{equation}
\label{eq:Sigma}
S_K\ll B^\ell p^2 + B^{2\ell} p. 
\end{equation}

Taking $B = \fl{p^{1/\ell}}$, the bound~\eqref{eq:Sigma} 
becomes $S_K\ll B^{2\ell}p$; using this bound in
\eqref{eq:S Sigma} along with the bound for $J(2A,\cM)$
afforded by Theorem~\ref{thm:cong}, we get that
$$
|\SAMN|^{2\ell}
\le A^{-2}M^{2\ell-2}N^{2\ell-1}
(A^2M^2p^{-1}+AM^{7/4}p^{-1/4}+AM+M^2)p^{1+o(1)}.
$$
We now choose 
$$
A=\fl{\frac{N}{2B}}\asymp N p^{-1/\ell},
$$
which guarantees that the condition~\eqref{eq:AB N} is met,
and after simple calculations we obtain the stated bound. 

\section{Comments}

Iwaniec and   S{\'a}rk{\"o}zy~\cite{IwSar}  have considered a question about the distance
between the product set of two sufficiently dense sets of integers in the interval $[1,N]$
and the set of perfect squares. The same question can also be considered modulo $p$, 
which immediately leads to the question of obtaining nontrivial bounds on the trilinear  sums
$W_{\chi}(H,\cK, \cM;\bfalpha,\bfzeta,\bfeta)$ as in Section~\ref{sec:char sum} with a quadratic character $\chi$.

\section*{Acknowledgements} 

The authors are grateful to Roger Heath-Brown 
for several very useful discussions and
for making available a preliminary version of~\cite{H-B2}. 

This work was supported  in part by  
the  Australian Research Council  Grant DP170100786  (for I.~E.~Shparlinski).

\end{document}